\title[Existence for Thermistor Problems on Time Scales]{Existence
of Positive Solutions for Non Local $p$-Laplacian Thermistor
Problems on Time Scales}
\author{Moulay Rchid Sidi Ammi}
\address{Department of Mathematics\\
         University of Aveiro\\
         3810-193 Aveiro, Portugal}
\email{sidiammi@mat.ua.pt}
\author{Delfim F. M. Torres}
\address{Department of Mathematics\\
         University of Aveiro\\
         3810-193 Aveiro, Portugal}
\email{delfim@ua.pt}
\urladdr{http://www.mat.ua.pt/delfim}
\keywords{time scales, $p$-Laplacian, positive solutions, existence}
\subjclass[2000]{34B18, 39A10, 93C70}
\begin{document}

\begin{abstract}
We make use of the Guo-Krasnoselskii fixed point theorem on cones
to prove existence of positive solutions to a non local
$p$-Laplacian boundary value problem on time scales arising in
many applications.
\end{abstract}

\maketitle


\section{Introduction}

The purpose of this paper is to prove the existence of positive
solutions for the following non local $p$-Laplacian dynamic
equation on a time scale $\mathbb{T}$:
\begin{equation} \label{eq1}
 -\left ( \phi_{p}(u^{\triangle}(t))\right)^{\nabla}=
 \frac{\lambda f(u(t))}{( \int_{0}^{T} f(u(\tau ))\, \nabla \tau )^{k}},
 \quad  \forall t \in  (0,T)_{\mathbb{T}}= \mathbb{T} \, ,
\end{equation}
subject to the boundary conditions
\begin{equation} \label{eq2}
\begin{gathered}
\phi_{p}(u^{\triangle}(0)) - \beta
\phi_{p}(u^{\triangle}(\eta))=0, \quad 0< \eta < T, \\
u(T) -\beta u(\eta)=0,
\end{gathered}
\end{equation}
where $\phi_{p}(\cdot)$ is the $p$-Laplacian operator defined by
$\phi_{p}(s)= |s|^{p-2} s$, $p>1$, $(\phi_{p})^{-1}= \phi_{q}$
with $q$ the Holder conjugate of $p$, \textrm{i.e.} $\frac{1}{p}+
\frac{1}{q}= 1$. Function
\begin{itemize}
\item[(H1)] $f: (0, T)_{\mathbb{T}} \rightarrow \mathbb{R}^{+*}$
is assumed to be continuous
\end{itemize}
($\mathbb{R}^{+*}$ denotes the positive real numbers); $\lambda$
is a dimensionless parameter that can be identified with the
square of the applied potential difference at the ends of a
conductor; $f(u)$ is the temperature dependent resistivity of the
conductor; $\beta$ is a transfer coefficient supposed to verify
$0< \beta < 1$. Different values for $p$ and $k$ are connected
with a variety of applications for both $\mathbb{T} = \mathbb{R}$
and $\mathbb{T} = \mathbb{Z}$. When $k>1$, equation \eqref{eq1}
represents the thermo-electric flow in a conductor \cite{l1}. In
the particular case $p=k=2$, \eqref{eq1} has been used to describe
the operation of thermistors, fuse wires, electric arcs and
fluorescent lights \cite{s1,s2,l2,l3}. For $k=1$, equation
\eqref{eq1} models the phenomena associated with the occurrence of
shear bands (i) in metals being deformed under high strain rates
\cite{bl,bt}, (ii) in the theory of gravitational equilibrium of
polytropic stars \cite{kn}, (iii) in the investigation of the
fully turbulent behavior of real flows, using invariant measures
for the Euler equation \cite{clmp}, (iv) in modelling aggregation
of cells via interaction with a chemical substance (chemotaxis)
\cite{w}.

The theory of dynamic equations on time scales (or, more
generally, measure chains) was introduced in 1988 by Stefan Hilger
in his PhD thesis (see \cite{h1,h2}). The theory presents a
structure where, once a result is established for a general time
scale, then special cases include a result for differential
equations (obtained by taking the time scale to be the real
numbers) and a result for difference equations (obtained by taking
the time scale to be the integers). A great deal of work has been
done since 1988, unifying and extending the theories of
differential and difference equations, and many results are now
available in the general setting of time scales -- see
\cite{a1,abra,a2,a3,b1,b2} and references therein. We point out,
however, that results concerning $p$-Laplacian problems on time
scales are scarce \cite{da}. In this paper we prove existence of
positive solutions to the problem \eqref{eq1}-\eqref{eq2} on a
general time scale $\mathbb{T}$.


\section{Preliminaries}

Our main tool to prove existence of positive solutions
(Theorem~\ref{thm25}) is the Guo-Krasnoselskii fixed point theorem
on cones.

\begin{theorem}[Guo-Krasnoselskii fixed point theorem on cones \cite{g1,k1}]
\label{thm11} Let $X$ be a Banach space and $K\subset E$ be a cone
in $X$. Assume that $\Omega _1$ and $\Omega _2$ are bounded open
subsets of $K$ with $0\in \Omega _1\subset
\overline{\Omega}_1\subset \Omega _2$ and that $G:K\to K$ is a
completely continuous operator such that
\begin{enumerate}
\item[(i)] either $\|Gw\|\leq \|w\|$, $w\in \partial \Omega _1$,
and $\|Gw\|\geq \|w\|$, $w\in \partial \Omega _2$; or
\item [(ii)]  $\|Gw\|\geq \|w\|$, $w\in \partial \Omega _1$,
and $\|Gw\|\leq \|w\|$, $w\in \partial \Omega _2$.
\end{enumerate}
Then, $G$ has a fixed point in $\overline{\Omega }_2\backslash
\Omega _1$.
\end{theorem}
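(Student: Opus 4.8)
The plan is to deduce the theorem from the fixed point index theory for completely continuous maps on cones. For a bounded relatively open set $\Omega\subset K$ and a completely continuous $G:\overline{\Omega}\to K$ with no fixed points on the relative boundary $\partial\Omega$, one has an integer-valued index $i(G,\Omega,K)$ satisfying the usual normalization, additivity (excision), homotopy invariance, and solution properties. I would take these as known, obtaining the index on $K$ by retracting $X$ onto $K$ and appealing to Leray--Schauder degree; verifying that these axioms transfer to the cone is the foundational work underlying everything below.

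The core of the argument is two index computations, which I would isolate as lemmas. First, if $\|Gw\|\le\|w\|$ and $Gw\ne w$ for all $w\in\partial\Omega$ with $0\in\Omega$, then $i(G,\Omega,K)=1$: the point is that $Gw=\mu w$ is impossible for $\mu\ge1$, since $\mu=1$ is the excluded fixed point and $\mu>1$ would force $\|Gw\|=\mu\|w\|>\|w\|$; hence the homotopy $H(t,w)=tGw$, $t\in[0,1]$, is fixed-point free on $\partial\Omega$ and deforms $G$ to the zero map, whose index is $1$ by normalization. Second, if $\|Gw\|\ge\|w\|$ and $Gw\ne w$ for all $w\in\partial\Omega$ with $0\in\Omega$, then $i(G,\Omega,K)=0$: here I would produce a vector $e\in K\setminus\{0\}$ for which $w\ne Gw+\mu e$ for every $w\in\partial\Omega$ and $\mu\ge0$, which forces the index to vanish because the homotopy sliding $G$ along $e$ eventually carries the image off $\overline{\Omega}$. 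Turning the norm inequality $\|Gw\|\ge\|w\|$ into this noncollinearity statement is the delicate step, and I expect it to be the main obstacle; it is here that the cone structure (and, in the function-space setting of the paper, the monotonicity of the norm on $K$) is genuinely used, after first noting that $\|Gw\|\ge\|w\|\ge\inf_{w\in\partial\Omega}\|w\|>0$ rules out $Gw=\lambda w$ for $\lambda\in[0,1]$.

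With the two lemmas established, the theorem follows by additivity. Assuming, as we may, that $G$ has no fixed point on $\partial\Omega_1\cup\partial\Omega_2$ (otherwise such a point already lies in $\overline{\Omega}_2\setminus\Omega_1$ and we are done), in case (i) the inner boundary gives $i(G,\Omega_1,K)=1$ and the outer boundary gives $i(G,\Omega_2,K)=0$, whence
\[
i\bigl(G,\Omega_2\setminus\overline{\Omega}_1,K\bigr)=i(G,\Omega_2,K)-i(G,\Omega_1,K)=-1\neq0 .
\]
In case (ii) the two roles are interchanged and the same computation yields the value $+1$. In either case the index of $G$ on $\Omega_2\setminus\overline{\Omega}_1$ is nonzero, so by the solution property $G$ possesses a fixed point in $\overline{\Omega}_2\setminus\Omega_1$, as claimed.
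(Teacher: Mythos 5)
The paper itself does not prove Theorem~\ref{thm11}: it is invoked as a classical tool and attributed to Guo--Lakshmikantham \cite{g1} and Krasnoselskii \cite{k1}, so there is no internal proof to compare against; your attempt has to be measured against the standard proofs in those references, and your architecture is exactly the standard index-theoretic one. Most of it is sound: the index-one computation via the homotopy $H(t,w)=tGw$ is complete as sketched (a solution with $\mu>1$ contradicts $\|Gw\|\le\|w\|$, and $t=0$ is harmless because $0\in\Omega_1$ keeps $0$ off $\partial\Omega_1$ and bounds $\|w\|$ away from zero on the boundary); the derivation of index zero from the noncollinearity condition $w\ne Gw+\mu e$, $\mu\ge 0$, via the homotopy $H(t,w)=Gw+tMe$ with $M$ large is also correct (for $M$ exceeding $(\sup_{\overline{\Omega}}\|w\|+\sup_{\overline{\Omega}}\|Gw\|)/\|e\|$ the terminal map has no fixed points at all, so its index vanishes); and the final bookkeeping --- reducing to the case of no fixed points on $\partial\Omega_1\cup\partial\Omega_2$, then computing $i(G,\Omega_2\setminus\overline{\Omega}_1,K)=0-1=-1\ne 0$ by additivity and invoking the solution property --- is right, including the observation that a boundary fixed point already lies in $\overline{\Omega}_2\setminus\Omega_1$.

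The genuine gap is the one you flag yourself and then leave open: passing from $\|Gw\|\ge\|w\|$ on the relevant boundary to the existence of $e\in K\setminus\{0\}$ with $w\ne Gw+\mu e$ for all boundary $w$ and all $\mu\ge 0$. This is not a routine verification, and the pointwise argument you gesture at genuinely fails in the stated generality: in an arbitrary Banach space the norm need not be monotone on the cone, and then $w=Gw+\mu e$ with $\mu>0$ is perfectly compatible with $\|Gw\|>\|w\|$. For instance, in $\mathbb{R}^2$ with the norm $\|(a,b)\|=|a-b|+\varepsilon(|a|+|b|)$, $0<\varepsilon<1$, and the positive quadrant as cone, take $w=(1,1)$, $e=(0,1)$, $\mu=1$, so that $Gw=(1,0)$ satisfies $\|Gw\|=1+\varepsilon>2\varepsilon=\|w\|$. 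So the expansion inequality alone does not yield the noncollinearity statement by the route you propose, and since this index-zero lemma carries half of the theorem, your proposal as written is an incomplete proof: correct skeleton, one load-bearing lemma unproven (the classical treatments establish it by a different argument, not by the pointwise deduction). It is worth noting that in the concrete setting where the paper applies the theorem --- $E=\mathbb{C}_{ld}([0,T],\mathbb{R})$ with the maximum norm and a cone of nonnegative functions containing the positive constants --- your route does close: choosing $e\equiv 1$, the relation $w=Gw+\mu e$ with $\mu>0$ and $Gw\ge 0$ forces $\|Gw\|=\sup_t\bigl(w(t)-\mu\bigr)=\|w\|-\mu<\|w\|$, contradicting $\|Gw\|\ge\|w\|$; so your sketch is repairable here, but only by using the monotone (indeed additive-along-constants) structure of this particular norm, not in the generality in which the theorem is stated.
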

Using properties of $f$ on a bounded set $(0, T)_{\mathbb{T}}$, we
construct an operator (an integral equation) whose fixed points
are solutions to the problem \eqref{eq1}-\eqref{eq2}.

Now we introduce some basic concepts of time scales that are
needed in the sequel. For deeper details the reader can see, for
instance, \cite{a1,a4,b1}. A time scale $\mathbb{T}$ is an
arbitrary nonempty closed subset of $\mathbb{R}$. The
\emph{forward jump} operator $\sigma$ and the \emph{backward jump}
operator $\rho$, both from $\mathbb{T}$ to $\mathbb{T}$, are
defined in \cite{h1}:
$$
\sigma(t)=\inf\{\tau\in\mathbb{T}:\tau> t\}\in\mathbb{T}, \quad
\rho(t)=\sup\{\tau\in\mathbb{T}:\tau< t\}\in\mathbb{T} \, .
$$
A point $t\in\mathbb{T}$ is left-dense, left-scattered,
right-dense, or right-scattered if $\rho (t)=t,\ \rho(t)<t$,
$\sigma(t)=t$, or $\sigma(t)>t$, respectively. If $\mathbb{T}$ has
a right scattered minimum $m$, define
$\mathbb{T}_{k}=\mathbb{T}-\{m\}$; otherwise set
$\mathbb{T}_{k}=\mathbb{T}$. If $\mathbb{T}$ has a left scattered
maximum $M$, define $\mathbb{T}^{k}=\mathbb{T}-\{M\}$; otherwise
set $\mathbb{T}^{k}=\mathbb{T}$.

Let $f:\mathbb{T} \to \mathbb{R}$ and $t\in \mathbb{T}^{k}$
(assume $t$ is not left-scattered if $t=\sup\mathbb{T}$), then the
delta derivative of $f$ at the point $t$ is defined to be the
number $f^{\Delta}(t)$ (provided it exists) with the property that
for each $\epsilon>0$ there is a neighborhood $U$ of $t$ such that
$$
|f(\sigma(t))-f(s)-f^{\Delta}(t)(\sigma(t)-s) |\le | \sigma(t)-s|,
\quad \mbox{for all } s\in U \, .
$$
Similarly, for $t\in \mathbb{T}$ (assume $t$ is not
right-scattered if $t=\inf\mathbb{T}$),  the nabla derivative of
$f$ at the point $t$ is defined to be the number $f^{\nabla}(t)$
(provided it exists) with the property that for each $\epsilon >0$
there is a neighborhood $U$ of $t$ such that
$$
|f(\rho(t))-f(s)-f^{\nabla}(t)(\rho(t)-s) |\le | \rho(t)-s |,
\quad \mbox{for all } s\in U \, .
$$
If $\mathbb{T}=\mathbb{R}$, then $x^\Delta(t)=x^\nabla(t)=x'(t)$.
If $\mathbb{T}=\mathbb{Z}$, then $x^\Delta(t)=x(t+1)-x(t)$ is the
forward difference operator while $x^\nabla(t)=x(t)-x(t-1)$ is the
backward difference operator.

A function $f$ is left-dense continuous ($ld$-continuous) if $f$
is continuous at each left-dense point in $\mathbb{T}$ and its
right-sided limit exists at each right-dense point in
$\mathbb{T}$. Let $f$ be $ld$-continuous. If $F^{\nabla}(t)=f(t)$,
then the nabla integral is defined by
$$
\int^b_a f(t)\nabla t=F(b)-F(a) \, ;
$$
if $F^{\Delta}(t)=f(t)$, then the delta integral is defined by
$$
\int^b_a f(t)\Delta t=F(b)-F(a) \, .
$$
In the rest of this article $\mathbb{T}$ is a closed subset of
$\mathbb{R}$ with $0\in\mathbb{T}_k$, $T\in\mathbb{T}^k$; $E=
\mathbb{C}_{ld}([0, T], \mathbb{R})$, which is a Banach space with
the maximum norm $\|u\|= \max_{[0, T]_{\mathbb{T}}}|u(t)|$.


\section{Main Results}

By a positive solution of \eqref{eq1}-\eqref{eq2} we understand a
function $u(t)$ which is positive on $(0, T)_{\mathbb{T}}$ and
satisfies \eqref{eq1} and \eqref{eq2}.

\begin{lemma}
\label{lm1} Assume that hypothesis $(H1)$ is satisfied. Then,
$u(t)$ is a solution of \eqref{eq1}-\eqref{eq2} if and only if
$u(t) \in E$ is solution of the integral equation
$$
u(t)= -\int_{0}^{t}\phi_{q}\left (g(s) \right) \triangle s + B,
$$
where
\begin{equation*}
\begin{gathered}
g(s)= \int_{0}^{s}\lambda h(u(r)) \nabla r -A,\\
 A= \phi_{p}(u^{\triangle}(0))= -\frac{\lambda \beta}{1-\beta}\int_{0}^{\eta}
h(u(r))
\nabla r, \\
h(u(t))= \frac{\lambda f(u(t))}{( \int_{0}^{T} f(u(\tau ))\, \nabla
\tau )^{k}},\\
B=u(0)= \frac{1}{1-\beta} \left \{
\int_{0}^{T}\phi_{q}(g(s))\triangle s - \beta \int_{0}^{\eta}
\phi_{q}(g(s)) \triangle s \right  \}.
\end{gathered}
\end{equation*}
\end{lemma}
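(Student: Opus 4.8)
The plan is to prove the equivalence by integrating the dynamic equation twice and pinning down the two constants of integration using the boundary conditions, then conversely verifying that the integral formula solves the problem.

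First I would establish the forward direction. Starting from \eqref{eq1}, written as $-\left(\phi_p(u^\triangle(t))\right)^\nabla = \lambda h(u(t))$, I would take the nabla integral from $0$ to $s$. By the Fundamental Theorem of Calculus on time scales this yields $\phi_p(u^\triangle(s)) - \phi_p(u^\triangle(0)) = -\int_0^s \lambda h(u(r))\,\nabla r$, so that $\phi_p(u^\triangle(s)) = -g(s)$ with $A = \phi_p(u^\triangle(0))$ as in the statement. The first step in determining $A$ is to apply the boundary condition $\phi_p(u^\triangle(0)) = \beta\,\phi_p(u^\triangle(\eta))$ from \eqref{eq2}: evaluating the integrated identity at $s=\eta$ gives $\phi_p(u^\triangle(\eta)) = A - \lambda\int_0^\eta h(u(r))\,\nabla r$, and substituting into $A = \beta\,\phi_p(u^\triangle(\eta))$ lets me solve the resulting linear equation for $A$, producing $A = -\frac{\lambda\beta}{1-\beta}\int_0^\eta h(u(r))\,\nabla r$ (here $0<\beta<1$ guarantees $1-\beta\neq 0$).

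Next I would recover $u$ itself. Applying $\phi_p^{-1}=\phi_q$ to $\phi_p(u^\triangle(s)) = -g(s)$ and using that $\phi_q$ is odd gives $u^\triangle(s) = -\phi_q(g(s))$. Taking the delta integral from $0$ to $t$ and writing $B=u(0)$ yields $u(t) = -\int_0^t \phi_q(g(s))\,\triangle s + B$, which is exactly the asserted integral equation. To identify $B$, I would impose the second boundary condition $u(T) = \beta\, u(\eta)$: evaluating the integral formula at $t=T$ and $t=\eta$, substituting, and again solving the linear equation in $B$ gives the stated expression $B = \frac{1}{1-\beta}\left\{\int_0^T \phi_q(g(s))\,\triangle s - \beta\int_0^\eta \phi_q(g(s))\,\triangle s\right\}$.

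For the converse, I would start from the integral equation and differentiate. Delta-differentiating $u(t) = -\int_0^t \phi_q(g(s))\,\triangle s + B$ gives $u^\triangle(t) = -\phi_q(g(t))$, hence $\phi_p(u^\triangle(t)) = -g(t)$; then nabla-differentiating and using $g^\nabla(s)=\lambda h(u(s))$ recovers \eqref{eq1}. The boundary conditions \eqref{eq2} then follow by direct substitution of the explicit constants $A$ and $B$ back into the formulas for $u$ and $\phi_p(u^\triangle)$ at the points $0$, $\eta$, and $T$. The main technical obstacle, modest but genuine, is the careful interplay between the two kinds of derivatives and integrals on the time scale: one must make sure the nabla and delta Fundamental Theorems are applied to functions of the appropriate regularity, that the antiderivatives used are legitimate on $\mathbb{T}$, and that the oddness of $\phi_q$ is invoked correctly so the signs in $g(s)$ and in the final formula match. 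Beyond that bookkeeping, the argument is a routine double integration.
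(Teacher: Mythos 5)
Your proposal is correct and follows essentially the same route as the paper's own proof: integrate \eqref{eq1} once and use the condition $\phi_{p}(u^{\triangle}(0))=\beta\,\phi_{p}(u^{\triangle}(\eta))$ to solve the linear equation for $A$, invert via $\phi_q$ (using its oddness) and integrate again, then use $u(T)=\beta u(\eta)$ to solve for $B$, with sufficiency checked by delta-differentiating the integral formula. Your remarks on the regularity needed for the nabla/delta fundamental theorems are a welcome (if slightly more careful) addition to what the paper leaves implicit.
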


\begin{proof}
We begin by proving necessity. Integrating the equation
\eqref{eq1} we have
$$
\phi_{p}(u^{\triangle}(s))= \phi_{p}(u^{\triangle}(0)) -
\int_{0}^{s}\lambda h(u(r)) \nabla r.
$$
On the other hand, by the boundary condition \eqref{eq2}
$$
\phi_{p}(u^{\triangle}(0)) = \beta \phi_{p}(u^{\triangle}(\eta))=
\beta \left ( \phi_{p}(u^{\triangle}(0)) - \int_{0}^{\eta}\lambda
h(u(r)) \nabla r \right).
$$
Then,
$$
A= \phi_{p}(u^{\triangle}(0)) = \frac{-\lambda \beta}{1- \beta}
\int_{0}^{\eta}h(u(r)) \nabla r.
$$
It follows that
\begin{equation*}
u^{\triangle}(s)=  \phi_{q}\left(-\lambda \int_{0}^{s} h(u(r))
\nabla r +A\right)=-\phi_{q}(g(s)).
\end{equation*}
Integrating the last equation we obtain
\begin{equation} \label{equa3}
u(t)= u(0) - \int_{0}^{t} \phi_{q}(g(s)) \triangle s.
\end{equation}
Moreover, by \eqref{equa3} and the boundary condition \eqref{eq2},
we have
\begin{equation*}
\begin{split}
u(0)&= u(T) + \int_{0}^{T} \phi_{q}(g(s)) \triangle s\\
&= \beta u(\eta) + \int_{0}^{T} \phi_{q}(g(s)) \triangle s \\
&= \beta \left( u(0)- \int_{0}^{\eta} \phi_{q}(g(s)) \triangle s
\right)+ \int_{0}^{T} \phi_{q}(g(s)) \triangle s.
\end{split}
\end{equation*}
Then,
$$
u(0)=B= \frac{1}{1-\beta} \left( -\beta \int_{0}^{\eta}
\phi_{q}(g(s)) \triangle s + \int_{0}^{T} \phi_{q}(g(s)) \triangle s
\right).
$$
Sufficiency follows by a simple calculation, taking the delta
derivative of $u(t)$.
\end{proof}

\begin{lemma}
Suppose $(H1)$ holds. Then, a solution $u$ of
\eqref{eq1}-\eqref{eq2} satisfies $u(t) \geq 0$ for all $t \in (0,
T)_{\mathbb{T}}$.
\end{lemma}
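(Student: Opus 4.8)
The plan is to read off the sign of every ingredient in the integral representation furnished by Lemma~\ref{lm1} and then locate the minimum of $u$. First I would note that hypothesis $(H1)$ gives $f(u(r))>0$, and since $\lambda\ge 0$ we get $h(u(r))=\frac{\lambda f(u(r))}{(\int_0^T f(u(\tau))\,\nabla\tau)^k}\ge 0$ for every $r$; hence $s\mapsto\int_0^s\lambda h(u(r))\,\nabla r$ is nonnegative and nondecreasing. Because $0<\beta<1$, the constant $A=-\frac{\lambda\beta}{1-\beta}\int_0^{\eta}h(u(r))\,\nabla r$ is nonpositive, so $g(s)=\int_0^s\lambda h(u(r))\,\nabla r-A\ge 0$ on $[0,T]_{\mathbb{T}}$. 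Since $\phi_q(x)=|x|^{q-2}x$ is odd and increasing, it follows that $\phi_q(g(s))\ge 0$ as well.

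Next I would exploit the identity $u^{\triangle}(s)=-\phi_q(g(s))$ recorded in the proof of Lemma~\ref{lm1}. As $\phi_q(g(s))\ge 0$, the delta derivative of $u$ is nonpositive throughout $[0,T]_{\mathbb{T}}$, so $u$ is nonincreasing; equivalently, $t\mapsto\int_0^t\phi_q(g(s))\,\triangle s$ is nondecreasing because its integrand is nonnegative. Consequently the minimum of $u$ over $[0,T]_{\mathbb{T}}$ is attained at $t=T$, and it suffices to verify that $u(T)\ge 0$. The key computation is to insert the explicit value of $B=u(0)$ into $u(T)=B-\int_0^T\phi_q(g(s))\,\triangle s$. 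Collecting the coefficients of $\int_0^T\phi_q(g(s))\,\triangle s$ via $\frac{1}{1-\beta}-1=\frac{\beta}{1-\beta}$, the $\eta$-integrals cancel down to
\begin{equation*}
u(T)=\frac{\beta}{1-\beta}\left(\int_0^T\phi_q(g(s))\,\triangle s-\int_0^{\eta}\phi_q(g(s))\,\triangle s\right)=\frac{\beta}{1-\beta}\int_{\eta}^{T}\phi_q(g(s))\,\triangle s .
\end{equation*}
Since $\eta<T$ and $\phi_q(g(s))\ge 0$, this is nonnegative, so $u(T)\ge 0$ and therefore $u(t)\ge u(T)\ge 0$ for all $t\in(0,T)_{\mathbb{T}}$.

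I expect the only subtle point to be the passage from the sign of the delta derivative to monotonicity of $u$, since on a general time scale one must argue with care rather than differentiate freely; I would settle this by the standard fact that an antiderivative of a nonnegative $ld$-continuous function is nondecreasing, which applies directly to $\int_0^t\phi_q(g(s))\,\triangle s$. Everything else is routine bookkeeping with the signs of $A$, $g$, and $B$, and the decisive structural feature is simply that evaluating at the endpoint $t=T$ turns the representation of $B$ into a manifestly nonnegative tail integral over $[\eta,T]_{\mathbb{T}}$.
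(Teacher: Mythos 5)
Your proposal is correct and follows essentially the same route as the paper: sign analysis giving $A\le 0$, $g\ge 0$, $\phi_q(g)\ge 0$, then the computation $u(T)=\frac{\beta}{1-\beta}\bigl(\int_0^T\phi_q(g(s))\,\triangle s-\int_0^{\eta}\phi_q(g(s))\,\triangle s\bigr)\ge 0$, and finally $u(t)\ge u(T)$ from the nonnegativity of the integrand. The only cosmetic difference is that you omit the paper's (redundant) separate verification that $u(0)=B\ge 0$, which your monotonicity argument makes unnecessary.
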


\begin{proof}
We have $A=\frac{-\lambda \beta}{1- \beta} \int_{0}^{\eta}h(u(r))
\nabla r \leq 0 $. Then, $g(s)= \lambda \int_{0}^{s} h(u(r))-A \geq
0$. It follows that $\phi_{p}(g(s)) \geq 0$. Since $0 < \beta < 1$,
we also have
\begin{equation*}
\begin{split}
u(0)&= B = \frac{1}{1-\beta} \left \{ \int_{0}^{T}\phi_{q}(g(s))\triangle s
- \beta \int_{0}^{\eta} \phi_{q}(g(s)) \triangle s \right \}\\
& \geq \frac{1}{1-\beta} \left \{ \beta
\int_{0}^{T}\phi_{q}(g(s))\triangle s - \beta \int_{0}^{\eta}
\phi_{q}(g(s))\triangle s  \right \} \\
&\geq 0
\end{split}
\end{equation*}
and
\begin{equation*}
\begin{split}
u(T)&= u(0)-  \int_{0}^{T}\phi_{q}(g(s))\triangle s\\
&= \frac{-\beta}{1-\beta} \int_{0}^{\eta}\phi_{q}(g(s))\triangle s +
\frac{1}{1-\beta}\int_{0}^{T}\phi_{q}(g(s))\triangle s -
\int_{0}^{T}\phi_{q}(g(s))\triangle s\\
 & = \frac{-\beta}{1-\beta}
\int_{0}^{\eta}\phi_{q}(g(s))\triangle s + \frac{\beta}{1-\beta}
\int_{0}^{T} \phi_{q}(g(s))  \triangle s\\
& = \frac{\beta}{1-\beta}   \left \{
\int_{0}^{T}\phi_{q}(g(s))\triangle s-
\int_{0}^{\eta}\phi_{q}(g(s))\triangle s \right \} \\
&\geq 0.
\end{split}
\end{equation*}
If $t \in (0, T)_{\mathbb{T}}$,
\begin{equation*}
\begin{split}
u(t)&= u(0)-  \int_{0}^{t}\phi_{q}(g(s))\triangle s\\
& \geq -  \int_{0}^{T}\phi_{q}(g(s))\triangle s + u(0)= u(T) \\
& \geq 0 \, .
\end{split}
\end{equation*}
\end{proof}

\begin{lemma}
\label{lm23} If $(H1)$ holds, then $u(T) \geq \rho u(0)$, where
$\rho = \beta \frac{T-\eta}{T- \beta \eta} \geq 0$.
\end{lemma}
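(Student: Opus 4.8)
The plan is to work directly from the closed-form expressions for $u(0)$ and $u(T)$ already obtained in the proof of the preceding lemma, exploiting one extra piece of structure: the integrand $\phi_{q}(g(s))$ is not merely non-negative but monotone. First I would record that $g$ is non-decreasing in $s$. Indeed, by Lemma~\ref{lm1} we have $g(s)=\lambda\int_{0}^{s}h(u(r))\,\nabla r-A$ with $A$ a constant and $h(u(\cdot))\ge 0$ (since $f$ is positive by $(H1)$), so $s\mapsto\lambda\int_{0}^{s}h(u(r))\,\nabla r$ is non-decreasing; combined with $g(s)\ge 0$ (established in the previous lemma) and the fact that $\phi_{q}$ is increasing on $[0,\infty)$, this gives that $\psi(s):=\phi_{q}(g(s))$ is a non-negative, non-decreasing function of $s$.

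Next, writing $I_{T}=\int_{0}^{T}\psi(s)\,\triangle s$ and $I_{\eta}=\int_{0}^{\eta}\psi(s)\,\triangle s$, the formulas from the previous lemma read
\[
u(T)=\frac{\beta}{1-\beta}\bigl(I_{T}-I_{\eta}\bigr),\qquad
u(0)=\frac{1}{1-\beta}\bigl(I_{T}-\beta I_{\eta}\bigr).
\]
A direct algebraic manipulation then yields the identity
\[
u(T)-\rho\,u(0)=\frac{\beta}{T-\beta\eta}\bigl(\eta\,I_{T}-T\,I_{\eta}\bigr),
\]
so that, since $0<\beta<1$ and $T-\beta\eta>0$, the whole claim reduces to proving the single inequality $\eta\,I_{T}\ge T\,I_{\eta}$.

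The main (and essentially only) substantive step is this last inequality, which says that the mean value of $\psi$ over $[0,T]_{\mathbb{T}}$ dominates its mean value over the sub-interval $[0,\eta]_{\mathbb{T}}$; here is where monotonicity of $\psi$ enters. On $[\eta,T]_{\mathbb{T}}$ one has $\psi(s)\ge\psi(\eta)$ and on $[0,\eta]_{\mathbb{T}}$ one has $\psi(s)\le\psi(\eta)$, so that $\int_{\eta}^{T}\psi\,\triangle s\ge(T-\eta)\psi(\eta)$ and $I_{\eta}\le\eta\,\psi(\eta)$. Combining these gives $\eta\int_{\eta}^{T}\psi\,\triangle s\ge\eta(T-\eta)\psi(\eta)\ge(T-\eta)I_{\eta}$, and adding $\eta I_{\eta}$ to both sides produces $\eta I_{T}\ge T I_{\eta}$, as required. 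The only point to be careful about is that all these are delta integrals on a time scale, so I would invoke the standard monotonicity and constant-integration properties $\int_{a}^{b}c\,\triangle s=c(b-a)$ and $\psi_{1}\le\psi_{2}\Rightarrow\int\psi_{1}\le\int\psi_{2}$ rather than pointwise Riemann reasoning; with those in hand the argument is identical to the continuous case.
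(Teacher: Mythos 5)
Your proof is correct, but it takes a genuinely different route from the paper's. The paper argues geometrically: from $\phi_{p}(u^{\triangle}(s))=A-\int_{0}^{s}\lambda h(u(r))\,\nabla r\le 0$ it deduces $u^{\triangle}\le 0$ and, since $\phi_{p}(u^{\triangle})$ is non-increasing, that $u^{\triangle}$ is non-increasing, hence $u$ is concave; the chord inequality between $(0,u(0))$ and $(T,u(T))$ evaluated at $s=\eta$ gives $Tu(\eta)-\eta u(T)\ge (T-\eta)u(0)$, and substituting the boundary condition $u(T)=\beta u(\eta)$ from \eqref{eq2} yields $\bigl(\tfrac{T}{\beta}-\eta\bigr)u(T)\ge (T-\eta)u(0)$, which is the claim. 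You instead start from the closed-form expressions $u(T)=\tfrac{\beta}{1-\beta}(I_{T}-I_{\eta})$ and $u(0)=\tfrac{1}{1-\beta}(I_{T}-\beta I_{\eta})$ already derived in the paper, and reduce the lemma, via the exact identity $u(T)-\rho u(0)=\tfrac{\beta}{T-\beta\eta}\bigl(\eta I_{T}-T I_{\eta}\bigr)$ (your algebra checks out), to the mean-comparison inequality $\eta I_{T}\ge T I_{\eta}$ for the non-negative, non-decreasing function $\psi=\phi_{q}(g)$; note that monotonicity of $\psi$ is precisely the same structural fact ($u^{\triangle}=-\psi$ non-increasing) that underlies the paper's concavity, used analytically rather than geometrically. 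Your route buys full rigor on an arbitrary time scale from only the elementary properties $\int_{a}^{b}c\,\triangle s=c(b-a)$ and monotonicity of the delta integral, which is a genuine improvement: the paper's first displayed inequality, $u(T)\ge u(0)+T\tfrac{u(T)-u(\eta)}{T-\eta}$, is garbled as written (what is actually used is the chord inequality $u(\eta)\ge u(0)+\tfrac{\eta}{T}\bigl(u(T)-u(0)\bigr)$), and the chord-below-graph property of concave functions on a general time scale is invoked without justification; you also obtain an explicit formula for the defect $u(T)-\rho u(0)$. What the paper's route buys in exchange is its by-products: it establishes $u^{\triangle}\le 0$, $\|u\|=u(0)$ and $\inf_{t\in(0,T)_{\mathbb{T}}}u(t)=u(T)$ along the way, facts that are reused for membership in the cone $K$ and in Lemma~\ref{lem:3.4}; your argument, being tailored to the single inequality, does not deliver these.
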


\begin{proof}
We have $\phi_{p}(u^{\triangle}(s))= \phi_{p}(u^{\triangle}(0)) -
\int_{0}^{s}\lambda h(u(r)) \nabla r \leq 0$. Since $A=
\phi_{p}(u^{\triangle}(0)) \leq 0$, then $u^{\triangle} \leq 0$.
This means that $\|u\| = u(0)$, $\inf_{t \in (0,
T)_{\mathbb{T}}}u(t) = u(T)$. Moreover,
$\phi_{p}(u^{\triangle}(s))$ is non increasing which implies with
the monotonicity of $\phi_{p}$ that $u^{\triangle}$ is a non
increasing function on $(0, T)_{\mathbb{T}}$. It follows from the
concavity of $u(t)$ that each point on the chord between $(0,
u(0))$ and $(T, u(T))$ is below the graph of $u(t)$. We have
$$u(T) \geq u(0)+T \frac{u(T)-u(\eta)}{T-\eta}.$$
On other terms,
$$
Tu(\eta) - \eta u(T) \geq (T- \eta)u(0).
$$
Using the boundary condition \eqref{eq2}, it follows that
$$
\left(\frac{T}{\beta}- \eta\right) u(T) \geq (T- \eta)u(0).
$$
Then,
$$
u(T) \geq \beta \frac{T-\eta}{T- \beta \eta} u(0).
$$
\end{proof}

In order to apply Theorem~\ref{thm11}, we define the cone $K$ by
$$
K= \big\{u\in E, u \mbox { is concave on } (0, T)_{\mathbb{T}}
\mbox { and } \inf_{t \in (0, T)_{\mathbb{T}}} u(t) \geq \rho
\|u\|\big\} \, .
$$
It is easy to see that \eqref{eq1}-\eqref{eq2} has a solution
$u=u(t)$ if and only if $u$ is a fixed point of the operator $G: K
\rightarrow E$ defined by
\begin{equation} \label{eq3}
Gu(t)=  -\int_{0}^{t}\phi_{q}\left (g(s) \right) \triangle s + B,
\end{equation}
where $g$ and  $B$ are defined as in Lemma~\ref{lm1}.

\begin{lemma}
\label{lem:3.4}
Let $G$ be defined by \eqref{eq3}. Then,
\begin{description}
\item[$(i)$] $G(K) \subseteq K$;
\item[$(ii)$] $G: K \rightarrow K$ is completely continuous.
\end{description}
\end{lemma}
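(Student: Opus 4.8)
The plan is to prove the two assertions separately, exploiting the observation that for every $u \in K$ the image $Gu$ has exactly the structural form of a solution exhibited in Lemma~\ref{lm1}; consequently the sign, monotonicity and concavity computations carried out in the proofs of the two lemmas preceding the cone definition apply verbatim with the solution replaced by $Gu$.

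For part $(i)$, I would begin by noting that $f > 0$ forces $h(u(\cdot)) \geq 0$, so that $A = -\tfrac{\lambda\beta}{1-\beta}\int_0^\eta h(u(r))\,\nabla r \leq 0$ and $g(s) = \lambda\int_0^s h(u(r))\,\nabla r - A \geq 0$, with $g$ nondecreasing in $s$. Since $\phi_q$ is increasing, $\phi_q(g(\cdot))$ is nonnegative and nondecreasing, hence $(Gu)^{\triangle}(t) = -\phi_q(g(t)) \leq 0$ is nonincreasing; this simultaneously yields $\|Gu\| = Gu(0) = B$, $\inf_{(0,T)_{\mathbb{T}}} Gu = Gu(T)$, and the concavity of $Gu$. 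A direct cancellation using the explicit value of $B$ then shows $Gu(T) = \beta\,Gu(\eta)$, i.e. $Gu$ itself obeys the boundary condition \eqref{eq2}. Armed with concavity, $(Gu)^{\triangle} \leq 0$, and this boundary relation, the chord estimate in the proof of Lemma~\ref{lm23} applies literally and gives $Gu(T) \geq \rho\,Gu(0)$, that is $\inf_{(0,T)_{\mathbb{T}}} Gu \geq \rho\|Gu\|$. As $Gu \in E$ by construction, we conclude $Gu \in K$, proving $G(K) \subseteq K$.

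For part $(ii)$ I would verify continuity and compactness separately, their conjunction being precisely complete continuity. Fix a bounded set $S \subseteq K$ with $\|u\| \leq R$ for all $u \in S$; since $\rho\|u\| \leq u(t) \leq \|u\|$ for $u \in K$, the functions in $S$ sweep out a compact range on which the continuous, strictly positive $f$ admits a positive lower bound $m_R$, whence the nonlocal denominator satisfies $\int_0^T f(u(\tau))\,\nabla\tau \geq m_R T > 0$ uniformly over $S$. Consequently $h(u(\cdot))$, and then $g$ and $\phi_q(g)$, are uniformly bounded on $S$ by a constant $C$, yielding at once the uniform bound $\|Gu\| \leq CT$ and the equicontinuity estimate $\left|Gu(t_2)-Gu(t_1)\right| = \left|\int_{t_1}^{t_2}\phi_q(g(s))\,\triangle s\right| \leq C\,|t_2-t_1|$; the Arzel\`a--Ascoli theorem in $\mathbb{C}_{ld}([0,T],\mathbb{R})$ then renders $G(S)$ relatively compact. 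For continuity, if $u_n \to u$ in $E$ then uniform continuity of $f$ on this common compact range gives $f(u_n(\cdot)) \to f(u(\cdot))$ uniformly; as the denominators remain bounded below, $h(u_n) \to h(u)$ uniformly, so $g_{u_n} \to g_u$ uniformly and, by uniform continuity of $\phi_q$ on bounded sets, $\phi_q(g_{u_n}) \to \phi_q(g_u)$ uniformly, giving $Gu_n \to Gu$ in $E$.

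The step I expect to be the main obstacle is the control of the nonlocal denominator $\bigl(\int_0^T f(u(\tau))\,\nabla\tau\bigr)^{k}$: every boundedness and convergence estimate rests on keeping this quantity bounded away from zero and on propagating the convergence $u_n \to u$ through both the integral in the denominator and the outer composition with $\phi_q$. This is exactly where hypothesis $(H1)$ is essential; once the denominator is controlled uniformly on bounded subsets of $K$, the boundedness, Lipschitz and Arzel\`a--Ascoli arguments are routine.
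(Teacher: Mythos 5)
Your proposal is correct and follows essentially the same route as the paper: part $(i)$ by transporting the sign, monotonicity, concavity and chord computations of the preceding lemmas to $Gu$ for arbitrary $u \in K$, and part $(ii)$ by bounding the nonlocal denominator away from zero on bounded subsets of $K$, deriving uniform bounds and equicontinuity, and invoking Arzel\`a--Ascoli. In fact you supply details the paper leaves implicit --- notably the identity $Gu(T) = \beta\, Gu(\eta)$, which is what lets the argument of Lemma~\ref{lm23} apply to $Gu$ rather than only to solutions of \eqref{eq1}-\eqref{eq2}, and a direct uniform-convergence argument for continuity where the paper merely cites the dominated convergence theorem.
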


\begin{proof}
Condition $(i)$ holds from previous lemmas. We now prove
$(ii)$. Suppose that $D \subseteq K$ is a bounded set. Let
$u \in D$. We have:
\begin{equation*}
\begin{split}
\left|Gu(t)\right| &= \left|-\int_{0}^{t}\phi_{q}\left(g(s) \right)
\triangle s + B\right|\\
& =\left|-\int_{0}^{t}\phi_{q} \left( \int_{0}^{s} \frac{\lambda
f(u(r))}{( \int_{0}^{T} f(u(\tau ))\, \nabla \tau )^{k}}\nabla r-A
\right)  \triangle s + B\right|\\
 & \leq \int_{0}^{T}\phi_{q}\left ( \int_{0}^{s}\frac{\lambda \sup_{u
\in D} f(u)}{(T\inf_{u\in D})^{k}} \, \nabla r -A \right )\triangle s + |B|,
\end{split}
\end{equation*}

\begin{equation*}
\begin{split}
|A| &= \left|\frac{\lambda \beta}{1-\beta}\int_{0}^{\eta} h(u(r)) \nabla r\right| \\
& = \left|\frac{\lambda \beta}{1-\beta}\int_{0}^{\eta} \frac{
f(u(r))}{( \int_{0}^{T} f(u(\tau ))\, \nabla r )^{k}} \nabla r\right| \\
& \leq \frac{\lambda \beta}{1-\beta} \frac{ \sup_{u \in D}
f(u)}{(T\inf_{u\in D})^{k}} \, \, \eta.
\end{split}
\end{equation*}
In the same way, we have
\begin{equation*}
\begin{split}
|B| &\leq  \frac{1}{1-\beta} \int_{0}^{T} \phi_{q}(g(s))
\triangle s \\
& \leq  \frac{1}{1-\beta}\int_{0}^{T} \phi_{q}\left( \frac{\lambda
\sup_{u \in D}f(u)}{(T\inf_{u\in D})^{k}}\left(s
+ \frac{\beta}{1-\beta} \eta\right)\right) \triangle s \, .
\end{split}
\end{equation*}
It follows that
$$
|Gu(t)| \leq \int_{0}^{T} \phi_{q} \left ( \frac{\lambda \sup_{u \in
D} f(u)}{(T\inf_{u\in D})^{k}}\left(s+\frac{\beta \eta}{1-\beta}\right) \right)
\triangle s +|B|.
$$
As a consequence, we get
\begin{equation*}
\begin{split}
\| Gu  \| & \leq \frac{2- \beta}{1-\beta}\int_{0}^{T} \phi_{q} \left
( \frac{\lambda \sup_{u \in D} f(u)}{(T\inf_{u\in
D})^{k}}\left(s+\frac{\beta \eta}{1-\beta}\right)\right)\\
 & \leq  \frac{2}{1-\beta}\phi_{q} \left ( \frac{\lambda
\sup_{u \in D} f(u)}{(T\inf_{u\in D})^{k}} \right )\int_{0}^{T}
 \phi_{q} \left(s+\frac{\beta \eta}{1-\beta}\right)  \triangle s \, .
\end{split}
\end{equation*}
We conclude that $G(D)$ is bounded. Item $(ii)$ follows by a
standard application of Arzela-Ascoli and Lebesgue dominated
theorems.
\end{proof}

\begin{theorem}[Existence result on cones]
\label{thm25} Suppose that $(H1)$ holds. Assume furthermore that
there exist two positive numbers $a$ and $b$ such that
\begin{description}
\item[$(H2)$] $\max_{0\leq u \leq a}f(u) \leq \phi_{p}(aA_{1})$,
\item[$(H3)$] $\min_{0\leq u \leq b} f(u)\geq \phi_{p}(bB_{1})$,
\end{description}
where
$$
A_{1}=\frac{1- \beta}{T(2- \beta)} \phi_{p} \left ( \frac{1}{(T
\inf_{0 \leq u \leq a} f(u))^{k}}\left(T + \frac{\beta
\eta}{1-\beta}\right)\right)
$$
and
$$
B_{1}=\frac{1- \beta}{\beta(T- \eta)} \phi_{p}(\eta) \phi_{p}
\left(\frac{\lambda}{\left(T \sup_{0 \leq u \leq b} f(u)\right)^{k}}\right) \, .
$$
Then, there exists $0 <\lambda_* < 1$ such that the non local
$p$-Laplacian problem \eqref{eq1}-\eqref{eq2} has at least one
positive solution $\overline{u}$, $a \leq \overline{u} \leq b$,
for any $\lambda \in (0, \lambda_*)$.
\end{theorem}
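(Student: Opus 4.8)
The plan is to apply the Guo--Krasnoselskii theorem (Theorem~\ref{thm11}) to the operator $G$ of~\eqref{eq3}. By Lemma~\ref{lem:3.4}, $G$ maps the cone $K$ into itself and is completely continuous, so the only remaining task is to produce two cone-balls on whose boundaries the norm inequalities of Theorem~\ref{thm11} hold. Assuming (as the conclusion $a\le\overline u\le b$ suggests) that $a<b$, I would set
$$
\Omega_1=\{u\in K:\|u\|<a\},\qquad \Omega_2=\{u\in K:\|u\|<b\},
$$
so that $0\in\Omega_1\subset\overline{\Omega}_1\subset\Omega_2$, and verify alternative $(i)$: a compression estimate $\|Gu\|\le\|u\|$ on $\partial\Omega_1$ together with an expansion estimate $\|Gu\|\ge\|u\|$ on $\partial\Omega_2$. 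A fixed point $\overline u\in\overline{\Omega}_2\setminus\Omega_1$ then satisfies $a\le\|\overline u\|\le b$, which is the asserted positive solution. Throughout I use that every $Gu$ is nonnegative and decreasing (since $\phi_q(g)\ge0$ and $Gu^{\triangle}=-\phi_q(g)$), so that $\|Gu\|=Gu(0)=B$.

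For the compression estimate, fix $u\in\partial\Omega_1$, so $0\le u(t)\le a$ on $(0,T)_{\mathbb T}$. I would bound $B$ from above exactly as in the proof of Lemma~\ref{lem:3.4}, now with $\max_{0\le u\le a}f$ and $\inf_{0\le u\le a}f$ in place of the suprema and infima over $D$. The homogeneity $\phi_q(\lambda x)=\phi_q(\lambda)\phi_q(x)$ factors out $\phi_q(\lambda)=\lambda^{q-1}$, and hypothesis $(H2)$, namely $\max_{0\le u\le a}f\le\phi_p(aA_1)$, is precisely what makes the remaining $\lambda$-independent factor at most $a$ after applying $\phi_q=\phi_p^{-1}$. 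Because $\lambda<1$ gives $\phi_q(\lambda)\le1$, this yields $\|Gu\|\le a=\|u\|$ and fixes the upper endpoint $\lambda_*\le1$.

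For the expansion estimate, fix $u\in\partial\Omega_2$; the cone condition gives $\rho b\le u(t)\le b$. Since $-A\ge0$, one has $g(s)\ge\lambda\int_0^s h(u(r))\,\nabla r\ge\lambda s\,\frac{\min_{0\le u\le b}f}{(T\sup_{0\le u\le b}f)^k}$, while splitting $\int_0^T=\int_0^\eta+\int_\eta^T$ in $B$ yields $\|Gu\|=B\ge\frac{1}{1-\beta}\int_\eta^T\phi_q(g(s))\,\triangle s$. Inserting $(H3)$ in the form $\phi_q\big(\min_{0\le u\le b}f\big)\ge bB_1$ and using $\int_\eta^T\phi_q(s)\,\triangle s\ge(T-\eta)\phi_q(\eta)$, the constant $B_1$ is tailored so that the surviving factors combine and reduce the estimate to $\|Gu\|\ge b=\|u\|$. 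Theorem~\ref{thm11} then delivers the fixed point.

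The step I expect to be the real obstacle is not either estimate in isolation but their reconciliation over a common range of $\lambda$. Both bounds for $\|Gu\|$ carry the same factor $\phi_q(\lambda)=\lambda^{q-1}$, whereas $(H3)$ injects an extra $\phi_p(\lambda)=\lambda^{p-1}$ through $B_1$, so the expansion side effectively scales like $\lambda^{p+q-2}$ while the compression side scales like $\lambda^{q-1}$; the former pushes toward large $\lambda$ and the latter toward small $\lambda$. The delicate point is therefore to check that a single threshold $\lambda_*\in(0,1)$ can be chosen for which both inequalities hold throughout $(0,\lambda_*)$, i.e. that the admissible interval cut out by $(H2)$ and $(H3)$ is nonempty. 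This is where I would concentrate the careful bookkeeping of the constants $A_1$ and $B_1$.
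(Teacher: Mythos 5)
Your outline reproduces the paper's proof essentially verbatim: the same cone-balls $\Omega_a=\{u\in K:\|u\|\le a\}$ and $\Omega_b$, the compression estimate on $\partial\Omega_a$ from $(H2)$ with $\phi_q(\lambda)$ factored out by homogeneity, the expansion estimate on $\partial\Omega_b$ from $(H3)$ via $g(s)\ge \lambda (bB_1)^{p-1}s/(T\sup_{0\le u\le b}f)^{k}$, and the same elementary bound $\int_\eta^T\phi_q(s)\,\triangle s\ge (T-\eta)\phi_q(\eta)$. Your one deviation is harmless and in fact slightly sharper: you bound $\|Gu\|=Gu(0)=B\ge \frac{1}{1-\beta}\int_\eta^T\phi_q(g(s))\,\triangle s$, whereas the paper bounds $\|Gu\|\ge Gu(T)=\frac{\beta}{1-\beta}\int_\eta^T\phi_q(g(s))\,\triangle s$, which is where the factor $\beta$ in the denominator of $B_1$ comes from; your version still closes against the paper's $B_1$ since $\frac{1}{1-\beta}\ge\frac{\beta}{1-\beta}$.

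The step you flag as ``the real obstacle,'' however, is a genuine gap, and you should know it is not a routine verification that the paper performs and you merely deferred: it is precisely where the paper's own proof breaks down. Substituting the stated $B_1$ into the final chain gives, with $\Lambda=\lambda/(T\sup_{0\le u\le b}f)^k$, the lower bound $\|Gu\|\ge b\,\phi_p(\eta)\phi_q(\eta)\,\phi_p(\Lambda)\phi_q(\Lambda)=b(\eta\Lambda)^{p+q-2}$, because $\phi_p(x)\phi_q(x)=x^{(p-1)+(q-1)}$, which is not $1$ (the constants $A_1$, $B_1$ are built with $\phi_p$ where the estimates produce a $\phi_q$, as though $\phi_p(x)$ were the reciprocal of $\phi_q(x)$ rather than its compositional inverse). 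Hence the asserted ``$\ge b$'' holds only when $\eta\Lambda\ge 1$, i.e.\ for $\lambda$ bounded away from zero; the compression chain has the same defect, since ``$\le\phi_q(\lambda_*)a$'' requires $\phi_p(X)\phi_q(X)\le 1$ for $X=\frac{1}{(T\inf_{0\le u\le a}f)^k}\left(T+\frac{\beta\eta}{1-\beta}\right)$. Your scaling count ($\lambda^{q-1}$ on the compression side versus $\lambda^{p+q-2}$ on the expansion side) shows the obstruction is structural, not notational: since $0\le u\le b$ gives $g(s)\le C\lambda$ with $C$ independent of $\lambda\in(0,1)$ (here $\min_{[0,b]}f>0$ by $(H1)$), one gets $\|Gu\|\le C'\phi_q(\lambda)\to 0$ uniformly on $\overline\Omega_b$ as $\lambda\to 0^+$, so the expansion inequality $\|Gu\|\ge b$ on $\partial\Omega_b$ with $b$ fixed necessarily fails for all small $\lambda$, and no threshold $\lambda_*$ can validate both estimates throughout $(0,\lambda_*)$. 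So your proposal is faithful to the paper everywhere the paper is rigorous, but the reconciliation you rightly single out cannot be completed as the statement stands; repairing it requires letting $b$ depend on $\lambda$ or restricting $\lambda$ to an interval bounded away from zero, which amounts to amending the theorem, not just finishing the bookkeeping.
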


\begin{proof}
Let $\Omega_{r}= \{ u\in K, \|u\| \leq r \}$, $\partial
\Omega_{r}= \{ u\in K, \|u\| = r  \}$. If $u \in \partial
\Omega_{a}$, then $0\leq u \leq a$, $t \in (0, T)_{\mathbb{T}}$.
This implies $f(u(t)) \leq  \max_{0\leq u \leq a} f(u) \leq
\phi_{p}(aA)$. We can write that
\begin{equation*}
\begin{split}
\| Gu  \| & \leq  \int_{0}^{T} \phi_{q}(g(s))\triangle s  +B\\
& \leq \int_{0}^{T} \phi_{q} \left (\int_{0}^{s} \frac{\lambda
f(u(r))}{( \int_{0}^{T} f(u(\tau ))\, \nabla \tau )^{k}} \nabla r
-A \right ) \triangle s + B \, ,
\end{split}
\end{equation*}
\begin{equation*}
|A| = \frac{\lambda \beta }{1-\beta } \int_{0}^{\eta} \frac{
f(u(r))}{( \int_{0}^{T} f(u(\tau ))\, \nabla \tau )^{k}} \nabla r
\leq  \frac{\lambda \beta }{1-\beta }\frac{(aA_{1})^{p-1}}{(T
\inf_{0 \leq u \leq a} f(u))^{k}} \eta \, ,
\end{equation*}
\begin{equation*}
g(s) \leq  \frac{\lambda (aA_{1})^{p-1}}{(T \inf_{0 \leq u \leq a}
f(u))^{k}} \left( T+ \frac{\beta \eta}{1- \beta} \right) \, .
\end{equation*}
Then,
\begin{equation*}
\begin{split}
\int_{0}^{T} \phi_{q}(g(s)) \triangle s & \leq   \phi_{q} \left (
\frac{\lambda (aA_{1})^{p-1}}{(T \inf_{0 \leq u \leq a} f(u))^{k}}
 \left( T+ \frac{\beta \eta}{1- \beta} \right)\right )T\\
 & = a A_{1} T \phi_{q} \left(
\frac{\lambda}{(T \inf_{0 \leq u \leq a} f(u))^{k}}\left( T+
\frac{\beta \eta}{1- \beta}\right)\right).
 \end{split}
\end{equation*}
Moreover,
\begin{equation*}
\begin{split}
B& = \frac{1}{1- \beta} \left (  \int_{0}^{T}\phi_{q} (g(s))
\triangle s-\beta  \int_{0}^{\eta}\phi_{q} (g(s)) \triangle s \right )\\
&  \leq \frac{1}{1- \beta} \left (  \int_{0}^{T}\phi_{q} (g(s))
\triangle s \right )  \\
&\leq  a A_{1} \frac{T}{1- \beta}\phi_{q} \left( \frac{\lambda}{(T
\inf_{0 \leq u \leq a} f(u))^{k}}\left( T+ \frac{\beta \eta}{1-
\beta}\right)\right).
\end{split}
\end{equation*}
For $A_{1}$ as in the statement of the theorem, it follows that
\begin{equation*}
\begin{split}
 \|Gu\|& \leq aA_{1} T  \frac{2- \beta}{1-\beta} \phi_{q} \left (
\frac{\lambda}{(T \inf_{0 \leq u \leq a} f(u))^{k}}\left( T+
\frac{\beta \eta}{1- \beta}\right)\right)\\
& \leq \phi_{q}(\lambda) a A_{1} T \frac{2- \beta}{1-\beta}
\phi_{q} \left( \frac{1}{(T \inf_{0 \leq u \leq a}
f(u))^{k}}\left( T+ \frac{\beta
\eta}{1- \beta}\right)\right)\\
& \leq \phi_{q}(\lambda_*) a A_{1} T \frac{2- \beta}{1-\beta}
\phi_{q} \left( \frac{1}{(T \inf_{0 \leq u \leq a}
f(u))^{k}}\left( T + \frac{\beta
\eta}{1- \beta}\right) \right)\\
&\leq  \phi_{q}(\lambda_*) a\\
 & \leq a= \|u\|.
\end{split}
\end{equation*}
If $u \in \partial \Omega_{b}$, we have
\begin{equation*}
\begin{split}
\|Gu\| &\geq -\int_{0}^{T}\phi_{q}\left (g(s) \right) \triangle s + B \\
&\geq -\int_{0}^{T}\phi_{q}\left (g(s) \right) \triangle s +
\frac{1}{1-\beta} \int_{0}^{T}\phi_{q}\left (g(s) \right)
\triangle s - \frac{\beta}{1-\beta} \int_{0}^{\eta}\phi_{q}\left
(g(s) \right) \triangle s\\
&\geq \frac{\beta}{1-\beta} \int_{0}^{T}\phi_{q}\left (g(s)
\right) \triangle s -\frac{\beta}{1-\beta}
\int_{0}^{\eta}\phi_{q}\left
(g(s) \right) \triangle s\\
& \geq \frac{\beta}{1-\beta} \int_{\eta}^{T}\phi_{q}\left (g(s)
\right) \triangle s.
\end{split}
\end{equation*}
Since $A \leq 0$, we have
\begin{equation*}
\begin{split}
g(s) & = \lambda \int_{0}^{s} h(u(r)) \nabla r -A \geq \lambda
\int_{0}^{s} h(u(r)) \nabla r \\
& \geq \lambda \int_{0}^{s} \frac{f(u)}{(T \sup_{0 \leq u \leq
b}f(u))^{k}}\\
& \geq \lambda \frac{(bB_{1})^{p-1}}{(T \sup_{0 \leq u \leq
b})^{k}} s.\\
\end{split}
\end{equation*}
Using the fact that $\phi_{q}$ is nondecreasing we get
\begin{equation*}
\begin{split}
\phi_{q}(g(s))&  \geq  \phi_{q} \left ( \lambda
\frac{(bB_{1})^{p-1}}{(T \sup_{0 \leq u \leq b})^{k}} s \right )\\
& \geq bB_{1} \phi_{q}\left(\frac{\lambda}{(T \sup
f(u))^{k}}\right) \phi_{q}(s).
\end{split}
\end{equation*}
Then, using the expression of $B_{1}$,
\begin{equation*}
\begin{split}
\|Gu\| & \geq \frac{\beta }{1- \beta} bB_{1} \phi_{q}
\left(\frac{\lambda}{(T \sup f(u))^{k}}\right) \int_{\eta}^{T}
\phi_{q}(s) \triangle s \\
& \geq bB_{1} \frac{\beta }{1- \beta}  \phi_{q}
\left(\frac{\lambda}{(T
\sup f(u))^{k}}\right) \phi_{q}(\eta)(T- \eta)\\
 &  \geq b= \|u\|.
\end{split}
\end{equation*}
As a consequence of Lemma~\ref{lem:3.4} and Theorem~\ref{thm11},
$G$ has a fixed point theorem $\overline{u}$ such that $a \leq
\overline{u} \leq b$.
\end{proof}


\section{An Example}

We consider a function $f$ which arises with the negative
coefficient thermistor (NTC-thermistor). For this example the
electrical resistivity decreases with the temperature.

\begin{corollary}
\label{cor} Assume $(H1)$ holds. If
$$
f_{0}= \lim_{u\rightarrow 0}\frac{f(u)}{\phi_{p}(u)}= 0 \, , \quad
f_{\infty}= \lim_{u\rightarrow
\infty}\frac{f(u)}{\phi_{p}(u)}=+\infty,
$$
or
$$
f_{0}= +\infty \, , \quad  f_{\infty}= 0 \, ,
$$
then problem \eqref{eq1}-\eqref{eq2} has at least a positive
solution.
\end{corollary}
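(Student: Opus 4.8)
The plan is to derive the Corollary directly from Theorem~\ref{thm25}: it suffices to show that each of the two sets of limit conditions forces the existence of positive numbers $a$ and $b$ for which $(H2)$ and $(H3)$ hold, after which existence follows from the Guo--Krasnoselskii alternative of Theorem~\ref{thm11}. The governing dictionary is that a vanishing quotient $f/\phi_{p}$ produces the compression inequality $\|Gu\|\le\|u\|$ (this is exactly $(H2)$), while a diverging quotient produces the expansion inequality $\|Gu\|\ge\|u\|$ (this is $(H3)$); moreover the behaviour of the quotient at $0$ controls a ball of small radius and its behaviour at $\infty$ controls a ball of large radius. Throughout I would use the multiplicativity $\phi_{p}(xy)=\phi_{p}(x)\phi_{p}(y)$ for positive arguments, which lets me factor $\phi_{p}(aA_{1})=\phi_{p}(a)\phi_{p}(A_{1})$ and $\phi_{p}(bB_{1})=\phi_{p}(b)\phi_{p}(B_{1})$.

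First case ($f_{0}=0$, $f_{\infty}=+\infty$). Since $f_{0}=0$, for every $\varepsilon>0$ there is $\delta>0$ with $f(u)\le\varepsilon\,\phi_{p}(u)$ for $0\le u\le\delta$; taking $a\le\delta$ gives $\max_{0\le u\le a}f(u)\le\varepsilon\,\phi_{p}(a)$, so after factoring, $(H2)$ reduces to the single inequality $\varepsilon\le\phi_{p}(A_{1}(a))$, which holds for $a$ small because $\varepsilon\to0$ while $\phi_{p}(A_{1})$ stays bounded away from $0$. For the expansion inequality I would invoke $f_{\infty}=+\infty$: given a large $M$ there is $R$ with $f(u)\ge M\,\phi_{p}(u)$ for $u\ge R$, and for $u\in\partial\Omega_{b}$ the cone estimate of Lemma~\ref{lm23} gives $u(t)\ge\rho\|u\|=\rho b$, so once $\rho b\ge R$ one has $f(u(t))\ge M\rho^{p-1}\phi_{p}(b)$ on the relevant range and $(H3)$ reduces to $M\rho^{p-1}\ge\phi_{p}(B_{1}(b))$. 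As $b\to\infty$ one may take $M$ arbitrarily large while $\phi_{p}(B_{1}(b))\to0$, so this holds for $b$ large. With $a<b$ these are the inequalities of case (i) of Theorem~\ref{thm11}, yielding a positive fixed point $\overline{u}$ with $a\le\|\overline{u}\|\le b$.

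Second case ($f_{0}=+\infty$, $f_{\infty}=0$). The argument is the mirror image. Now $f_{0}=+\infty$ supplies the expansion inequality $(H3)$ on a ball of small radius $r_{1}$: for small $u$ one has $f(u)\ge M\phi_{p}(u)$ with $M$ large, and the cone bound $u\ge\rho r_{1}$ on $\partial\Omega_{r_{1}}$ makes $(H3)$ hold once $r_{1}$ is small enough. Meanwhile $f_{\infty}=0$ supplies the compression inequality $(H2)$ on a ball of large radius $r_{2}$: splitting $[0,r_{2}]$ at a threshold $R$, the part $u\ge R$ obeys $f(u)\le\varepsilon\phi_{p}(u)$ while $f$ is bounded on $[0,R]$, so $\max_{0\le u\le r_{2}}f$ grows like $o(\phi_{p}(r_{2}))$ whereas $\phi_{p}(r_{2}A_{1})$ grows at least like $\phi_{p}(r_{2})$, and $(H2)$ holds for $r_{2}$ large. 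Here the $\ge$ inequality sits on the inner boundary and the $\le$ on the outer one, so one invokes case (ii) of Theorem~\ref{thm11} (re-running the two norm estimates from the proof of Theorem~\ref{thm25} with the roles of the radii exchanged), obtaining $r_{1}\le\|\overline{u}\|\le r_{2}$.

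The step requiring care, and the main obstacle, is the self-referential dependence of $A_{1}=A_{1}(a)$ and $B_{1}=B_{1}(b)$ on the radii through $\inf_{0\le u\le a}f$ and $\sup_{0\le u\le b}f$: one must check that this dependence does not defeat the limiting inequalities. The resolution is precisely the factorisation above, which decouples the radius from the structural constant and reduces each verification to a comparison between a quantity coming from the limit ($\varepsilon\to0$ or $M\to\infty$) and $\phi_{p}(A_{1})$ or $\phi_{p}(B_{1})$. What must then be confirmed is monotone in the right direction: whether $\inf_{0\le u\le a}f$ stabilises or tends to $0$ as $a\to0$, the factor $\phi_{p}(A_{1})$ remains bounded below or tends to $+\infty$, so the compression estimate only gets easier; symmetrically, the divergence of $\sup_{0\le u\le b}f$ drives $\phi_{p}(B_{1})\to0$, so the expansion estimate only gets easier. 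Once this comparison is made explicit in each of the four estimates, the verification of $(H2)$ and $(H3)$ is routine and both cases close as above.
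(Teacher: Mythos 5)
Your proposal is correct and takes essentially the same route as the paper: both arguments convert the limit conditions $f_{0}$, $f_{\infty}$ into the inequalities of type $(H2)$--$(H3)$ on a suitably small and a suitably large radius, re-run the norm estimates from the proof of Theorem~\ref{thm25}, and conclude via the two alternatives of Theorem~\ref{thm11}. If anything you are more careful than the published proof, which writes out only the case $f_{0}=0$, $f_{\infty}=+\infty$ and passes silently over both the cone lower bound $u(t)\geq \rho\|u\|$ from Lemma~\ref{lm23} and the dependence of $A_{1}$ and $B_{1}$ on the radii $a$ and $b$, two points you address explicitly.
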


\begin{proof}
If $f_{0}= 0$ then $\forall$ $A_{1} >0$ $\exists$ $a$ such that
$f(u) \leq (A_{1}u)^{p-1}$, $0 \leq u \leq a$. Similarly as above,
we can prove that $\|G u\| \leq \|u\|$, $\forall$ $u \in
\partial \Omega_{a}$. On the other hand,
if $f_{\infty}= +\infty$, then $\forall$ $B_{1} > 0$, $\exists$
$b> 0$ such that $f(u) \geq (B_{1}u)^{p-1}$, $u \geq b$. The same
way as in the proof of Theorem~\ref{thm25}, we have $\| G u\| \geq
\|u\|$, $\forall$ $u \in \partial \Omega_{b}$. By
Theorem~\ref{thm11} $G$ has a fixed point.
\end{proof}

For the NTC-thermistor the dependence of the resistivity with the
temperature can be expressed by
\begin{equation}
\label{eq:f:ex}
f(s)= \frac{1}{(1+s)^{k}} \, , \quad k \geq 2 \, .
\end{equation}
For $p=2$ we have
$$
f_{0}= \lim_{u\rightarrow 0}\frac{f(u)}{\phi_{p}(u)}= +\infty \, ,
\quad f_{\infty}= \lim_{u\rightarrow
\infty}\frac{f(u)}{\phi_{p}(u)}= 0 \, .
$$
It follows from Corollary~\ref{cor} that the boundary value
problem \eqref{eq1}-\eqref{eq2} with $p=2$ and $f$ as in
\eqref{eq:f:ex} has at least one positive solution.


\section*{Acknowledgements}

The authors were partially supported by the \emph{Portuguese
Foundation for Science and Technology} (FCT) through the
\emph{Centre for Research in Optimization and Control} (CEOC) of
the University of Aveiro, cofinanced by the European Community
fund FEDER/POCTI, and by the project SFRH/BPD/20934/2004.



\end{document}